\documentclass[11pt]{amsart}
\usepackage[a4paper,margin=28mm]{geometry}
\usepackage[T1]{fontenc}
\usepackage{lmodern}
\usepackage{amsmath,amssymb,amsthm,mathtools}
\usepackage{booktabs,array,longtable}
\usepackage{microtype}
\usepackage{xcolor}
\usepackage{tikz}
\usepackage{hyperref}
\usepackage[nameinlink,capitalize,noabbrev]{cleveref}
\usepackage{enumitem}
\hypersetup{colorlinks=true,linkcolor=blue!55!black,citecolor=blue!55!black,urlcolor=blue!55!black,pdftitle={Exact Zarankiewicz Values on Two Finite Frontier Slices},pdfauthor={Koyar Afrasyab}}

\newtheorem{theorem}{Theorem}[section]
\newtheorem{lemma}[theorem]{Lemma}
\newtheorem{proposition}[theorem]{Proposition}

\newtheorem{definition}[theorem]{Definition}
\newtheorem{remark}[theorem]{Remark}
\newcommand{\Z}{\mathrm Z}

\DeclareMathOperator{\rank}{rank}

\title[Exact Zarankiewicz values on two finite frontier slices]{Exact Zarankiewicz Values on Two Finite Frontier Slices}
\author{Koyar Afrasyab}
\thanks{Independent researcher. Email: \texttt{koyar@kinvectum.com}.}
\date{8 August 2026}
\subjclass[2020]{05C35, 05D99, 68R10, 90C05}
\keywords{Zarankiewicz number, extremal bipartite graph, forbidden submatrix, computer-assisted proof, Farkas certificate, block design}

\begin{document}
\begin{abstract}
The Zarankiewicz number $\Z(m,n,s,t)$ is the maximum number of edges in a bipartite graph with parts of orders $m$ and $n$ containing no copy of $K_{s,t}$. We give one combined, certificate-based computer-assisted proof for two finite slices and a corrected neighboring frontier:
\[
\begin{gathered}
  \Z(12,n,3,3)=6n\quad(18\le n\le22),
  \qquad \Z(13,22,3,3)=137,\\
  \Z(13,18,3,3)=116,\qquad \Z(14,17,3,3)=118,\\
  \Z(14,18,3,3)=124,\qquad \Z(15,17,3,3)=126,\\
  \Z(15,18,3,3)=132,\qquad 132\le\Z(16,17,3,3)\le133.
\end{gathered}
\]
The load-bearing new upper bounds are the exact $12\times18$ and $13\times18$ certificate packages. Their orbit certificates exclude every hypothetical matrix at the next edge count. Deletion lemmas and explicit witnesses close four neighboring cells, while the $16\times17$ entry is deliberately reported as an interval because only its 132-edge lower witness and the published 133 upper bound are certified here. Separately, the $13\times22$ proof excludes 138 ones by reducing to 83 degree profiles, rationally separating 77 of them, and eliminating the remaining six by marked-row congruences, leave enumeration, modular Gram tests, and exact Farkas certificates. All accepted claims are replayed by standard-library Python and exact integer/rational arithmetic; floating-point optimization is used only to discover certificates.
\end{abstract}
\maketitle

\section{Introduction}
The Zarankiewicz problem asks for the maximum number of edges in a bipartite graph avoiding a specified complete bipartite subgraph. It was posed by Zarankiewicz in 1951 \cite{Zarankiewicz1951} and was placed in its modern extremal form by K\H{o}v\'ari, S\'os and Tur\'an \cite{KST1954}. Although the asymptotic theory is extensive, exact values for moderate finite parameters remain difficult. The difficulty is particularly pronounced for $K_{3,3}$-free graphs: useful counting inequalities leave a small but highly structured set of integer configurations, while direct exhaustive search suffers from severe symmetry and proof-certification costs.

Write $\Z(m,n,s,t)$ for the maximum number of edges in a subgraph of $K_{m,n}$ containing no $K_{s,t}$. Roman's linear-programming framework \cite{Roman1975} and later strengthened relaxations \cite{DaviesGillHorsley2026} provide effective finite upper bounds. SAT-based methods have also been developed for exact finite cases \cite{Tan2022}. The classical finite table of Collins, Riasanovsky, Wallace and Radziszowski \cite{CollinsRiasanovskyWallaceRadziszowski2016} supplies, among other inputs, $\Z(13,17,3,3)\le110$ and $\Z(16,17,3,3)\le133$. On the lower-bound side, Bhan, Nobili, Raghuraman and Langer \cite{BhanNobiliLanger2026} reported the 12-by-22 value and a 137-edge construction for the 13-by-22 parameter, with upper bound 140.
\[
  137\le \Z(13,22,3,3)\le140.
\]
The present paper adds the exact 12-by-18 certificate and its deletion consequences, and completes the 13-by-22 upper bound.

\begin{theorem}\label{thm:combined}
\begin{align*}
\Z(12,n,3,3)&=6n &&(18\le n\le22),\\
\Z(13,22,3,3)&=137,\\
\Z(13,18,3,3)&=116,\\
\Z(14,17,3,3)&=118, & \Z(14,18,3,3)&=124,\\
\Z(15,17,3,3)&=126, & \Z(15,18,3,3)&=132,\\
132\le\Z(16,17,3,3)&\le133.
\end{align*}
The values at $n=18,19,20,21$, the $13$-by-$22$ upper bound, the four
neighboring frontier equalities, and the 132-edge $16$-by-$17$ construction
are the claims assembled here. The $n=22$ equality and its classical design
are retained as prior material.
\end{theorem}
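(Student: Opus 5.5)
The theorem assembles several independent claims, and I would prove each by pairing an explicit lower-bound witness with an upper bound. The upper bounds come either from a machine-checkable certificate or from an elementary deletion argument.

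\emph{The $12$-row slice.} The basic deletion tool is monotonicity: deleting a column of minimum degree from a $K_{3,3}$-free $m\times n$ matrix with $e$ ones leaves at least $e-\lfloor e/n\rfloor$ ones. Consequently $\Z(m,n,3,3)\le \lfloor n\,\Z(m,n-1,3,3)/(n-1)\rfloor$, and more simply $\Z(m,n)\ge\Z(m,n-1)$. The exact value $\Z(12,18,3,3)=108$ comes from the orbit certificate. I would fix the row-degree profile forced by $109$ ones and enumerate canonical partial matrices up to row/column symmetry. Each orbit leaf is then excluded by replaying an exact integer argument. The $K_{3,3}$ condition forces every triple of rows to share at most two columns, so $\sum_j\binom{d_j}{3}\le 2\binom{12}{3}=440$; combined with column-degree counting, this pins the degrees and leaves a finite list of structured cases. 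Upward, the averaging inequality gives $\Z(12,n)\le\lfloor n\cdot 6(n-1)/(n-1)\rfloor=6n$ inductively for $n=19,20,21$, with base case $n=18$. For lower bounds I would take the prior $12\times22$ design with $132$ ones, which has every column of degree $6$, and delete columns to obtain $6n$ for each $18\le n\le22$. The $n=22$ equality is quoted as prior material.

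\emph{Neighboring cells.} The $13\times18$ certificate package excludes $117$ ones in the same way as above, and an explicit $116$-edge witness supplies the matching lower bound. For $14\times17$, $14\times18$, $15\times17$ and $15\times18$, I would derive the upper bounds by deleting a minimum-degree row or column and appealing to known cells. For example, $\Z(14,18)\le\lfloor 14\cdot116/13\rfloor=124$, and $\Z(14,17)\le\lfloor14\cdot110/13\rfloor=118$ using the classical bound $\Z(13,17)\le110$. Similarly, $\Z(15,17)\le\lfloor 15\cdot118/14\rfloor=126$ and $\Z(15,18)\le\lfloor15\cdot124/14\rfloor=132$. Each of these cells is then matched by an explicit verified witness. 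For the last cell I would simply exhibit a $132$-edge $16\times17$ witness and cite $133$ from the table.

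\emph{The $13\times22$ upper bound.} This is the main obstacle. A $138$-edge matrix must satisfy the triple-count inequality and degree constraints. I would enumerate the feasible (row, column) degree profiles, of which there should be $83$. For each profile, I would solve a rational LP relaxation over counts of column types and pair/triple intersection statistics, and record exact Farkas certificates for the infeasible ones ($77$). The remaining six profiles need structural arguments. First, I would mark a high-degree row and use congruences on its column intersections. Next, I would enumerate the possible leaves of the residual configuration. Finally, on the resulting Gram matrices $MM^{T}$, I would apply modular tests (determinant or rank mod small primes) and targeted Farkas certificates. The hardest part will be making these six cases close: I expect the LP to be tight there, so the combinatorial case analysis must supply integrality cuts. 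All certificates would be replayed in exact arithmetic. Combined with the known $137$-edge construction, this gives $\Z(13,22,3,3)=137$.
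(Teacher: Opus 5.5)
Your overall architecture matches the paper's. You use the same minimum-degree deletion inequality, with identical floor computations for the $12$-row chain and the four frontier cells. You use the same nested $3$-$(12,6,2)$ witnesses, the cited $133$ for $16\times17$, and the same profile/Farkas/leave/Gram pipeline for $13\times22$.

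The step that fails as written is the reduction inside the $12\times18$ certificate, because triple counting does not pin the degrees. With $p(d)=\binom d3-15d+70$ one has $\sum_j\binom{d_j}3=\sum_jp(d_j)+375$. The bound $440$ therefore only gives $\sum_jp(d_j)\le65$, and already the nine profiles $5^{a}6^{17-2a}7^{a+1}$ ($0\le a\le8$) pass. On the row side, $\sum_i\binom{r_i}3\le2\binom{18}3=1632$ against a minimum of $1044$ leaves the row profile essentially free. So counting gives no ``row-degree profile forced by 109 ones.''

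The paper instead runs your own deletion inequality in the other direction, fed by bounds for the two neighbouring cells. Any $109$-one $12\times18$ matrix has $d_j\ge109-\Z(12,17,3,3)$ for every column and $r_i\ge109-\Z(11,18,3,3)$ for every row. The $12\times17$ and $11\times18$ bounds are certified in the same package, via $303$ and $51$ degree profiles. These lower bounds are what collapse the problem to the four normalized containment cases on which the orbit trees run.

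There are two smaller inaccuracies in your $13\times22$ sketch.
\begin{enumerate}
\item The $83$ profiles are column-degree histograms only, cut out by $\sum_jp(d_j)\le42$; no row data enter.
\item The marked row is not chosen for high degree. It is chosen because its deficit $D_r=132-\sum_{j:\,r\in E_j}\binom{d_j-1}2$ is among the smallest values allowed by its residue class mod $5$ (note $\binom52,\binom62\equiv0$). Averaging over $\sum_rD_r=3s$, with $s=572-\sum_j\binom{d_j}3$, forces such a row to exist. The profile $6^{18}7^{3}9^{1}$ is then closed by hand.
\end{enumerate}

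Finally, there is a shortcut that neither you nor the paper uses. The $13\times18$ upper bound does not logically need its certificate package. The same Collins et al.\ bound you invoke for $14\times17$ gives $\Z(13,18,3,3)\le\lfloor18\cdot110/17\rfloor=116$ by column deletion.
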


The proof is computer-assisted but certificate based. It is not a report of a mixed-integer solver returning ``infeasible.'' Every load-bearing numerical assertion is replayed by explicit finite data and exact arithmetic. The 12-by-$n$ strip, the 13-by-$22$ theorem, and the corrected frontier package retain separate certificate directories under \texttt{proof/z12\_18\_21/}, \texttt{proof/}, and \texttt{proof/frontier\_closure/}; the combined verification gate runs the first two and the frontier package's own gate.

\begin{theorem}\label{thm:13}
There is no $13\times22$ binary matrix with $138$ ones and no all-one $3\times3$ submatrix. Consequently,
\[
  \boxed{\Z(13,22,3,3)=137}.
\]
\end{theorem}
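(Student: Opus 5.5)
The lower bound $137$ comes from the construction of Bhan, Nobili, Raghuraman and Langer, so the task is to show that no $13\times22$ $K_{3,3}$-free matrix $A$ has $138$ ones. I would follow the pipeline announced in the abstract: reduce to finitely many degree profiles, kill most of them by rational counting/LP inequalities, and handle the rest by structural arguments with exact certificates.

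\emph{Step 1 (degree profiles).} Let the row degrees be $r_1,\dots,r_{13}$ and the column degrees $c_1,\dots,c_{22}$, with $\sum r_i=\sum c_j=138$. The average row degree is about $10.6$ and the average column degree about $6.3$. Since $A$ is $K_{3,3}$-free, every triple of rows has at most two common columns. Counting triples gives
\[
\sum_j\binom{c_j}{3}\le 2\binom{13}{3}=572 ,
\]
and the analogous bound holds for columns, $\sum_i\binom{r_i}{3}\le 2\binom{22}{3}$. I would also use the deletion bounds. Deleting a row of degree $r$ leaves a $12\times22$ configuration, so $138-r\le \Z(12,22,3,3)=132$, which forces $r\ge 6$. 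Deleting a column of degree $c$ gives $138-c\le \Z(13,21,3,3)$. I would also add the pairwise-intersection (Roman-type) inequalities. Enumerating the sorted degree sequences that satisfy all of these constraints, and merging by convexity where possible, should leave the stated $83$ profiles.

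\emph{Step 2 (rational separation).} For each profile I would set up a strengthened LP relaxation in the style of Roman and of Davies--Gill--Horsley. Its variables count rows and columns by degree, and counts of column pairs and triples by intersection size. Its constraints are the $K_{3,3}$-free conditions, namely that any two columns lie in a set of rows in which no three rows share a third column, together with the known bounds for smaller subproblems. A floating-point solver finds dual multipliers; I would round them to rationals and verify with exact arithmetic that they give a Farkas certificate of infeasibility. I expect this to eliminate $77$ profiles.

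\emph{Step 3 (the six survivors), which I expect to be the main obstacle.} These profiles are nearly tight, so the counting inequalities are almost equalities and pure LP fails. My first approach would be to mark a maximum-degree row and study its residual structure. The columns it meets carry a $12$-row $K_{2,3}$-type restriction, and tightness forces congruence conditions on row intersections modulo small primes, which I would turn into a Gram-matrix test: $AA^{T}$ must have a prescribed structure whose determinant or rank modulo $p$ is inconsistent. Where this is not enough, I would enumerate up to symmetry the possible ``leaves,'' meaning small submatrices on the marked rows, and for each orbit close the case with a new exact Farkas certificate on the restricted LP. The risk is combinatorial explosion in the leaf enumeration, so symmetry reduction by canonical forms is essential. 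All certificates are then replayed in exact rational arithmetic, which completes the proof.
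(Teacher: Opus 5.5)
The genuine gap is in Step~3, which carries essentially all of the difficulty. The Gram matrix you propose to test, $AA^{T}$, has no prescribed structure. Its off-diagonal entries are row co-degrees, which $K_{3,3}$-freeness does not fix. What is prescribed is the \emph{residual} Gram matrix at a marked row $r$. If $M$ is the incidence of the other twelve rows against the columns through $r$, then $(MM^{T})_{xy}=\lambda_{rxy}=2-\delta_{rxy}$, where $\delta_T=2-\lambda_T$. This matrix is close to $2$ off the diagonal, and the enumeration of point types and leave multigraphs is manageable, only when $D_r=\sum_{x<y}\delta_{rxy}$ is small.

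Marking a maximum-degree row gives no control of $D_r$. In $6^{16}7^{6}$, a row lying in eleven degree-six columns has degree $11$ and $D_r=22$, while a row in five columns of each degree has degree $10$ and $D_r=7$. The missing idea is the deficit identity $D_r=132-\sum_{j\ni r}\binom{d_j-1}{2}$ together with $\sum_rD_r=3s$, where $s=572-\sum_j\binom{d_j}{3}$. Degree-six and degree-seven columns contribute $\binom52=10$ and $\binom62=15$. Hence each $D_r$ lies in a residue class modulo $5$ determined by its membership in the few exceptional columns, and the small total then forces a row of very small deficit. For $6^{16}7^{6}$, $\sum_rD_r=126<13\cdot12$, so some $D_r\in\{2,7\}$. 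That row, not a maximum-degree row, is the one to mark. The same congruences plus a short packing argument also dispose of $6^{18}7^{3}9^{1}$ directly.

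Two further points would fail as written. First, closing each leave orbit with a Farkas certificate for a restricted LP cannot succeed everywhere. In the branch where the marked row has $D_r=7$ and lies in eight degree-six and three degree-seven columns, one Gram-surviving case is fractionally feasible, so no Farkas vector exists. The paper has to pass to integrality there: it enumerates all $285$ systems of the three residual size-six blocks, reduces them to three orbits under an explicitly verified group of order $8{,}640$, finds four integral completions, and then applies sixteen completion certificates against the global profiles.

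Second, the counts $83$ and $77$ you quote belong to objects you do not build. The $83$ profiles are column-degree histograms cut out by $\sum_jd_j=138$ and the supporting-line penalty $p(d)=\binom d3-15d+70$, which gives $\sum_jp(d_j)\le42$. The $77$ separations come from a block-polytope LP with one variable for every support $B\subseteq[13]$ of an active degree and one block fixed by symmetry. An aggregated Roman/Davies--Gill--Horsley LP over degree and intersection counts is much coarser, and nothing in your proposal indicates that it separates those profiles.
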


For the 13-by-22 component, the proof is computer-assisted but certificate based. It is not a report of a mixed-integer solver returning ``infeasible.'' Every load-bearing numerical assertion is replayed by short exact-arithmetic programs from explicit finite data. The architecture has three layers.

\begin{enumerate}[label=(\roman*)]
\item A global degree-profile argument reduces $138$ edges to $83$ profiles and eliminates $77$ by exact rational Farkas certificates.
\item A direct marked-row congruence argument excludes the exceptional profile $6^{18}7^3 9^1$.
\item The remaining five profiles are reduced to finite residual block-design problems on twelve points. Exhaustive leave-multigraph enumeration, necessary Gram conditions, and exact Farkas certificates exclude all residual cases.
\end{enumerate}

The source archive includes both certificate systems, all nested witnesses, independent profile enumerators, and a one-command verification gate. The result should nevertheless be regarded as a new computer-assisted proof pending independent reproduction and peer review.

\section{The exact $12\times n$ strip}
\label{sec:12-strip}

We first isolate the 12-row result. Identify each column with a support in
$[12]$ and let $\lambda_T$ be the number of columns containing a row triple
$T$. The $K_{3,3}$-free condition is $\lambda_T\le2$ for every
$T\in\binom{[12]}3$.

The load-bearing upper bound is the exact certificate proof
\[
  \Z(12,18,3,3)\le108.
\]
Its self-contained verifier enumerates 303 degree profiles for the neighboring
$12\times17$ bound and 51 profiles for the $11\times18$ bound. Those bounds
force any hypothetical 109-one $12\times18$ matrix into four normalized
containment cases. The orbit trees in \texttt{proof/z12\_18\_21/} contain exact
rational dual leaves and exhaustive integer branches for all four cases; the
standard-library replay accepts every certificate.

The 18-column witness has 108 ones and row-triple histogram
$(6,68,146)$ for multiplicities $(0,1,2)$. Four further witnesses are the
first 19, 20, 21, and 22 blocks of the explicit classical 3-$(12,6,2)$
Hadamard design listed in Appendix~\ref{app:12-design}; their histograms are
$(3,54,163)$, $(1,38,181)$, $(0,20,200)$, and $(0,0,220)$, respectively.
Thus the lower bounds are $108,114,120,126,132$.

\begin{lemma}[Minimum-column deletion]
\label{lem:min-column}
For every $m,n,s,t$ with $n\ge2$,
\[
  \Z(m,n,s,t)\le
  \left\lfloor\frac{n}{n-1}\,\Z(m,n-1,s,t)\right\rfloor.
\]
\end{lemma}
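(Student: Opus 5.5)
Take an extremal $K_{s,t}$-free bipartite graph $G$ with parts of orders $m$ and $n$ and $e=\Z(m,n,s,t)$ edges, viewed as an $m\times n$ binary matrix $A$ with no all-one $s\times t$ submatrix (for either orientation convention; the argument is symmetric). The plan is a direct averaging argument over column deletions, followed by integrality.

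First, we note that deleting any column of $A$ yields an $m\times(n-1)$ matrix that still has no all-one $s\times t$ submatrix. This holds because the forbidden configuration is a submatrix condition, and a submatrix of the smaller matrix is a submatrix of $A$. Hence for each column $j$ with $d_j$ ones we have $e-d_j\le \Z(m,n-1,s,t)$. Second, rather than choosing a minimum-degree column (which gives $e-\min_j d_j\le \Z(m,n-1,s,t)$ together with $\min_j d_j\le e/n$), we sum over all $n$ columns. Since $\sum_j d_j=e$, the sum gives $ne-e\le n\,\Z(m,n-1,s,t)$, that is,
\[
  e\le \frac{n}{n-1}\,\Z(m,n-1,s,t).
\]
The minimum-column version yields the same inequality: $e(1-1/n)\le e-\min_j d_j\le \Z(m,n-1,s,t)$. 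Finally, since $e$ is an integer, we may take the floor. The hypothesis $n\ge2$ is exactly what makes $n-1\ge1$, so that the division is legitimate and $\Z(m,n-1,s,t)$ is defined. If one allows the degenerate convention $\Z(m,0,s,t)=0$, the case $n=1$ would fail, which explains the restriction.

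There is no genuine obstacle here. The only point requiring care is the bookkeeping convention: the paper writes $K_{s,t}$-freeness as forbidding $s$ rows and $t$ columns (or the symmetric version $\lambda_T\le 2$ for $s=t=3$), and the deletion step must respect whichever convention is fixed. Since column deletion preserves freeness under either convention, the argument is uniform in $m,n,s,t$.
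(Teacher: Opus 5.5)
Your proof is correct and is essentially the paper's argument. The paper deletes a column of degree at most $\lfloor e/n\rfloor$ from a hypothetical matrix with one more edge than the floor and reaches a contradiction; you instead average over all $n$ column deletions of an extremal matrix and then use integrality of $e$, which gives the same inequality $(n-1)e\le n\,\Z(m,n-1,s,t)$ directly (your aside about $n=1$ is unnecessary, since the bound is simply undefined there).
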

\begin{proof}
If an $e$-edge $m\times n$ matrix existed, some column would have degree at
most $\lfloor e/n\rfloor$. Deleting it leaves at least
$e-\lfloor e/n\rfloor$ edges in an admissible $m\times(n-1)$ matrix. Taking
$e$ one larger than the displayed floor makes this residual count exceed
$\Z(m,n-1,s,t)$, a contradiction.
\end{proof}

Applying the lemma successively from $\Z(12,18,3,3)=108$ gives
\[
\Z(12,19,3,3)\le114,\quad
\Z(12,20,3,3)\le120,\quad
\Z(12,21,3,3)\le126,\quad
\Z(12,22,3,3)\le132.
\]
The nested witnesses attain all four bounds, proving the first line of
Theorem~\ref{thm:combined}. The $n=22$ equality is included for a unified
statement but is not claimed as a new value.

\section{Corrected finite-frontier closure}
\label{sec:frontier}

The second reproducibility package, under
\texttt{proof/frontier\_closure/}, proves the new exact value
\[
  \Z(13,18,3,3)=116.
\]
Its verifier reduces a hypothetical 117-edge matrix to 19 column-degree
profiles. Thirteen profiles are separated by exact integer-scaled Farkas
certificates; the remaining six are eliminated by marked-row deficit
averaging, exact point-type enumeration, and 12 symmetry-orbit pair-packing
certificates. The packaged 116-edge witness is checked exhaustively over all
row triples. The package's optional dependency replay rechecks the earlier
12-by-17 and 12-by-18 certificates.

Four neighboring equalities follow by the one-row density lemma and explicit
witnesses. Collins et al.~\cite{CollinsRiasanovskyWallaceRadziszowski2016}
give the published bound $\Z(13,17,3,3)\le110$, so a 119-edge $14\times17$
matrix would leave at least
\[
119-\left\lfloor119/14\right\rfloor=111>110
\]
edges after deleting a row; hence $\Z(14,17)\le118$. Similarly,
\[
125-\left\lfloor125/14\right\rfloor=117>116,
\qquad
127-\left\lfloor127/15\right\rfloor=119>118,
\]
give the upper bounds $\Z(14,18)\le124$ and $\Z(15,17)\le126$.
Finally,
\[
133-\left\lfloor133/15\right\rfloor=125>124
\]
gives $\Z(15,18)\le132$. The corresponding 118-, 124-, 126-, and 132-edge
witnesses are stored and checked in the same package.

For $16\times17$, the supplied 132-edge witness gives the lower bound
$\Z(16,17)\ge132$, while the published table gives $\Z(16,17)\le133$.
We therefore record only
\[
  132\le\Z(16,17,3,3)\le133.
\]
No SAT timeout, unsuccessful construction search, or unverified DRAT claim
is used in this interval.

\begin{remark}[Boundary of the retained frontier claims]
If $c_{ij}$ is the number of columns containing both rows $i$ and $j$,
deleting those rows removes $d_i+d_j-c_{ij}$ edges, so the remaining edge
count is $e-d_i-d_j+c_{ij}$. Earlier exploratory certificates based on this
reduction are not retained here: the explicit 126-edge $15\times17$ and
132-edge $16\times17$ witnesses rule out the former $125$- and $130$-edge
exact claims, and no independently replayed certificate for the associated
upper-bound claim is included. The paper therefore records only the certified
statements in Theorem~\ref{thm:combined}.
\end{remark}

\section{Block formulation and the lower bound}
Let $X=[13]$. Identify the $j$th column of a binary $13\times22$ matrix with its support $E_j\subseteq X$, and put $d_j=|E_j|$. For each triple $T\in\binom{X}{3}$, define
\[
  \lambda_T=|\{j:T\subseteq E_j\}|.
\]
The matrix is $K_{3,3}$-free if and only if
\begin{equation}\label{eq:triple-capacity}
  \lambda_T\le2 \qquad(T\in\binom{X}{3}).
\end{equation}
Repeated columns are allowed throughout.

The supplementary file \texttt{proof/data/z13\_22\_137\_blocks.json} contains $22$ blocks with total size $137$. Direct enumeration of all $\binom{13}{3}=286$ triples verifies \eqref{eq:triple-capacity}. Thus
\begin{equation}\label{eq:lower}
  \Z(13,22,3,3)\ge137.
\end{equation}
The same lower bound was first reported in \cite{BhanNobiliLanger2026}. A matrix plot of the independently checked witness is shown in \cref{fig:witness}; its block supports are listed in \cref{app:witness}.

\begin{figure}[ht]
\centering
\resizebox{0.92\textwidth}{!}{\begin{tikzpicture}[x=0.23cm,y=0.23cm]
  \draw[gray!55,very thin] (0,0) grid (22,13);
  \fill (0,11) rectangle ++(1,1);
  \fill (0,9) rectangle ++(1,1);
  \fill (0,7) rectangle ++(1,1);
  \fill (0,6) rectangle ++(1,1);
  \fill (0,2) rectangle ++(1,1);
  \fill (0,0) rectangle ++(1,1);
  \fill (1,10) rectangle ++(1,1);
  \fill (1,7) rectangle ++(1,1);
  \fill (1,6) rectangle ++(1,1);
  \fill (1,5) rectangle ++(1,1);
  \fill (1,3) rectangle ++(1,1);
  \fill (1,0) rectangle ++(1,1);
  \fill (2,10) rectangle ++(1,1);
  \fill (2,9) rectangle ++(1,1);
  \fill (2,7) rectangle ++(1,1);
  \fill (2,5) rectangle ++(1,1);
  \fill (2,2) rectangle ++(1,1);
  \fill (2,1) rectangle ++(1,1);
  \fill (3,12) rectangle ++(1,1);
  \fill (3,10) rectangle ++(1,1);
  \fill (3,5) rectangle ++(1,1);
  \fill (3,4) rectangle ++(1,1);
  \fill (3,3) rectangle ++(1,1);
  \fill (3,1) rectangle ++(1,1);
  \fill (4,12) rectangle ++(1,1);
  \fill (4,10) rectangle ++(1,1);
  \fill (4,9) rectangle ++(1,1);
  \fill (4,7) rectangle ++(1,1);
  \fill (4,4) rectangle ++(1,1);
  \fill (4,0) rectangle ++(1,1);
  \fill (5,12) rectangle ++(1,1);
  \fill (5,11) rectangle ++(1,1);
  \fill (5,10) rectangle ++(1,1);
  \fill (5,5) rectangle ++(1,1);
  \fill (5,2) rectangle ++(1,1);
  \fill (5,0) rectangle ++(1,1);
  \fill (6,12) rectangle ++(1,1);
  \fill (6,11) rectangle ++(1,1);
  \fill (6,9) rectangle ++(1,1);
  \fill (6,7) rectangle ++(1,1);
  \fill (6,5) rectangle ++(1,1);
  \fill (6,3) rectangle ++(1,1);
  \fill (7,12) rectangle ++(1,1);
  \fill (7,11) rectangle ++(1,1);
  \fill (7,10) rectangle ++(1,1);
  \fill (7,7) rectangle ++(1,1);
  \fill (7,6) rectangle ++(1,1);
  \fill (7,1) rectangle ++(1,1);
  \fill (8,11) rectangle ++(1,1);
  \fill (8,6) rectangle ++(1,1);
  \fill (8,5) rectangle ++(1,1);
  \fill (8,3) rectangle ++(1,1);
  \fill (8,2) rectangle ++(1,1);
  \fill (8,1) rectangle ++(1,1);
  \fill (9,11) rectangle ++(1,1);
  \fill (9,7) rectangle ++(1,1);
  \fill (9,5) rectangle ++(1,1);
  \fill (9,4) rectangle ++(1,1);
  \fill (9,1) rectangle ++(1,1);
  \fill (9,0) rectangle ++(1,1);
  \fill (10,11) rectangle ++(1,1);
  \fill (10,10) rectangle ++(1,1);
  \fill (10,9) rectangle ++(1,1);
  \fill (10,6) rectangle ++(1,1);
  \fill (10,5) rectangle ++(1,1);
  \fill (10,4) rectangle ++(1,1);
  \fill (11,11) rectangle ++(1,1);
  \fill (11,10) rectangle ++(1,1);
  \fill (11,9) rectangle ++(1,1);
  \fill (11,3) rectangle ++(1,1);
  \fill (11,1) rectangle ++(1,1);
  \fill (11,0) rectangle ++(1,1);
  \fill (12,12) rectangle ++(1,1);
  \fill (12,11) rectangle ++(1,1);
  \fill (12,6) rectangle ++(1,1);
  \fill (12,4) rectangle ++(1,1);
  \fill (12,3) rectangle ++(1,1);
  \fill (12,0) rectangle ++(1,1);
  \fill (13,11) rectangle ++(1,1);
  \fill (13,10) rectangle ++(1,1);
  \fill (13,7) rectangle ++(1,1);
  \fill (13,4) rectangle ++(1,1);
  \fill (13,3) rectangle ++(1,1);
  \fill (13,2) rectangle ++(1,1);
  \fill (14,10) rectangle ++(1,1);
  \fill (14,6) rectangle ++(1,1);
  \fill (14,4) rectangle ++(1,1);
  \fill (14,2) rectangle ++(1,1);
  \fill (14,1) rectangle ++(1,1);
  \fill (14,0) rectangle ++(1,1);
  \fill (15,12) rectangle ++(1,1);
  \fill (15,10) rectangle ++(1,1);
  \fill (15,9) rectangle ++(1,1);
  \fill (15,6) rectangle ++(1,1);
  \fill (15,3) rectangle ++(1,1);
  \fill (15,2) rectangle ++(1,1);
  \fill (16,12) rectangle ++(1,1);
  \fill (16,11) rectangle ++(1,1);
  \fill (16,9) rectangle ++(1,1);
  \fill (16,4) rectangle ++(1,1);
  \fill (16,2) rectangle ++(1,1);
  \fill (16,1) rectangle ++(1,1);
  \fill (17,12) rectangle ++(1,1);
  \fill (17,8) rectangle ++(1,1);
  \fill (17,7) rectangle ++(1,1);
  \fill (17,3) rectangle ++(1,1);
  \fill (17,2) rectangle ++(1,1);
  \fill (17,1) rectangle ++(1,1);
  \fill (17,0) rectangle ++(1,1);
  \fill (18,12) rectangle ++(1,1);
  \fill (18,8) rectangle ++(1,1);
  \fill (18,7) rectangle ++(1,1);
  \fill (18,6) rectangle ++(1,1);
  \fill (18,5) rectangle ++(1,1);
  \fill (18,4) rectangle ++(1,1);
  \fill (18,2) rectangle ++(1,1);
  \fill (19,9) rectangle ++(1,1);
  \fill (19,8) rectangle ++(1,1);
  \fill (19,5) rectangle ++(1,1);
  \fill (19,4) rectangle ++(1,1);
  \fill (19,3) rectangle ++(1,1);
  \fill (19,2) rectangle ++(1,1);
  \fill (19,0) rectangle ++(1,1);
  \fill (20,12) rectangle ++(1,1);
  \fill (20,9) rectangle ++(1,1);
  \fill (20,8) rectangle ++(1,1);
  \fill (20,6) rectangle ++(1,1);
  \fill (20,5) rectangle ++(1,1);
  \fill (20,1) rectangle ++(1,1);
  \fill (20,0) rectangle ++(1,1);
  \fill (21,9) rectangle ++(1,1);
  \fill (21,8) rectangle ++(1,1);
  \fill (21,7) rectangle ++(1,1);
  \fill (21,6) rectangle ++(1,1);
  \fill (21,4) rectangle ++(1,1);
  \fill (21,3) rectangle ++(1,1);
  \fill (21,1) rectangle ++(1,1);
  \node[font=\scriptsize,rotate=90,anchor=east] at (0.5,-0.2) {1};
  \node[font=\scriptsize,rotate=90,anchor=east] at (1.5,-0.2) {2};
  \node[font=\scriptsize,rotate=90,anchor=east] at (2.5,-0.2) {3};
  \node[font=\scriptsize,rotate=90,anchor=east] at (3.5,-0.2) {4};
  \node[font=\scriptsize,rotate=90,anchor=east] at (4.5,-0.2) {5};
  \node[font=\scriptsize,rotate=90,anchor=east] at (5.5,-0.2) {6};
  \node[font=\scriptsize,rotate=90,anchor=east] at (6.5,-0.2) {7};
  \node[font=\scriptsize,rotate=90,anchor=east] at (7.5,-0.2) {8};
  \node[font=\scriptsize,rotate=90,anchor=east] at (8.5,-0.2) {9};
  \node[font=\scriptsize,rotate=90,anchor=east] at (9.5,-0.2) {10};
  \node[font=\scriptsize,rotate=90,anchor=east] at (10.5,-0.2) {11};
  \node[font=\scriptsize,rotate=90,anchor=east] at (11.5,-0.2) {12};
  \node[font=\scriptsize,rotate=90,anchor=east] at (12.5,-0.2) {13};
  \node[font=\scriptsize,rotate=90,anchor=east] at (13.5,-0.2) {14};
  \node[font=\scriptsize,rotate=90,anchor=east] at (14.5,-0.2) {15};
  \node[font=\scriptsize,rotate=90,anchor=east] at (15.5,-0.2) {16};
  \node[font=\scriptsize,rotate=90,anchor=east] at (16.5,-0.2) {17};
  \node[font=\scriptsize,rotate=90,anchor=east] at (17.5,-0.2) {18};
  \node[font=\scriptsize,rotate=90,anchor=east] at (18.5,-0.2) {19};
  \node[font=\scriptsize,rotate=90,anchor=east] at (19.5,-0.2) {20};
  \node[font=\scriptsize,rotate=90,anchor=east] at (20.5,-0.2) {21};
  \node[font=\scriptsize,rotate=90,anchor=east] at (21.5,-0.2) {22};
  \node[font=\scriptsize,anchor=east] at (-0.18,12.5) {1};
  \node[font=\scriptsize,anchor=east] at (-0.18,11.5) {2};
  \node[font=\scriptsize,anchor=east] at (-0.18,10.5) {3};
  \node[font=\scriptsize,anchor=east] at (-0.18,9.5) {4};
  \node[font=\scriptsize,anchor=east] at (-0.18,8.5) {5};
  \node[font=\scriptsize,anchor=east] at (-0.18,7.5) {6};
  \node[font=\scriptsize,anchor=east] at (-0.18,6.5) {7};
  \node[font=\scriptsize,anchor=east] at (-0.18,5.5) {8};
  \node[font=\scriptsize,anchor=east] at (-0.18,4.5) {9};
  \node[font=\scriptsize,anchor=east] at (-0.18,3.5) {10};
  \node[font=\scriptsize,anchor=east] at (-0.18,2.5) {11};
  \node[font=\scriptsize,anchor=east] at (-0.18,1.5) {12};
  \node[font=\scriptsize,anchor=east] at (-0.18,0.5) {13};
\end{tikzpicture}}
\caption{A checked $13\times22$ matrix with $137$ ones. Black cells are ones. Every triple of rows is contained in at most two columns.}
\label{fig:witness}
\end{figure}

\section{Global reduction at 138 edges}
Assume for contradiction that blocks $E_1,\dots,E_{22}$ satisfy \eqref{eq:triple-capacity} and
\begin{equation}\label{eq:sumdegree}
  \sum_{j=1}^{22}d_j=138.
\end{equation}
Double counting incidences between blocks and row triples gives
\begin{equation}\label{eq:triplecount}
  \sum_{j=1}^{22}\binom{d_j}{3}
  =\sum_{T\in\binom X3}\lambda_T
  \le2\binom{13}{3}=572.
\end{equation}

Define the integer penalty
\begin{equation}\label{eq:penalty}
  p(d)=\binom d3-15d+70,\qquad 0\le d\le13.
\end{equation}
Its values are
\[
\begin{array}{c|rrrrrrrrrrrrrr}
d&0&1&2&3&4&5&6&7&8&9&10&11&12&13\\\hline
p(d)&70&55&40&26&14&5&0&0&6&19&40&70&110&161.
\end{array}
\]
Thus $p(d)\ge0$, with equality precisely for $d=6,7$. Combining \eqref{eq:sumdegree}--\eqref{eq:penalty},
\begin{equation}\label{eq:penaltybudget}
  \sum_{j=1}^{22}p(d_j)
  =\sum_j\binom{d_j}{3}-15\cdot138+70\cdot22
  \le42.
\end{equation}
Exact integer enumeration of all histograms $(n_0,\dots,n_{13})$ satisfying
\[
  \sum_dn_d=22,
  \qquad \sum_ddn_d=138,
  \qquad \sum_dp(d)n_d\le42
\]
leaves exactly $83$ degree profiles. This is independently regenerated by \texttt{enumerate138.py} and \texttt{verify\_138\_reduction.py}; no degree window is assumed.

\subsection{The block-polytope relaxation}
For a profile $(n_d)$, choose a degree $f$ with $n_f>0$ and, by row symmetry, fix one $f$-block as $F=\{1,\dots,f\}$. For every remaining allowed support $B\subseteq X$, let $x_B\ge0$ denote its multiplicity. Every actual matrix yields an integral feasible point of the following relaxation:
\begin{align}
 \sum_{B\supseteq T}x_B
 &\le2-\mathbf1[T\subseteq F]
 &&(T\in\binom X3), \label{eq:lp-triple}\\
 \sum_{B\ni r}\binom{|B|-1}{2}x_B
 &\le132-\mathbf1[r\in F]\binom{f-1}{2}
 &&(r\in X), \label{eq:lp-row}\\
 \sum_{B\supseteq P}(|B|-2)x_B
 &\le22-\mathbf1[P\subseteq F](f-2)
 &&(P\in\binom X2), \label{eq:lp-pair}\\
 \sum_{|B|=d}x_B
 &=n_d-\mathbf1[d=f]
 &&(d\text{ active}). \label{eq:lp-count}
\end{align}
The row and pair inequalities are nonnegative sums of triple-capacity inequalities; they are retained because they yield much shorter dual certificates.

\begin{lemma}[Exact Farkas criterion]\label{lem:farkas}
Write \eqref{eq:lp-triple}--\eqref{eq:lp-pair} as $Ax\le b$, and \eqref{eq:lp-count} as $Ex=e$, with $x\ge0$. If rational vectors $\alpha\ge0$ and unrestricted $\beta$ satisfy
\[
 A^T\alpha+E^T\beta\ge0,
 \qquad b^T\alpha+e^T\beta<0,
\]
then the profile is impossible.
\end{lemma}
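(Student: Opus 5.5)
The plan is the standard weak-duality argument, applied to the integral point that a hypothetical matrix would produce. Suppose, for contradiction, that a matrix with the given profile exists. Fix the distinguished $f$-block $F$ as in the construction, and let $x_B$ count the remaining columns with support $B$. First I will check that this nonnegative integral vector satisfies \eqref{eq:lp-triple}--\eqref{eq:lp-count}, so that $Ax\le b$, $Ex=e$ and $x\ge0$ hold.

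\begin{itemize}
\item \textbf{Triple inequalities.} Inequality \eqref{eq:lp-triple} is \eqref{eq:triple-capacity} with the contribution of $F$ moved to the right-hand side.
\item \textbf{Row inequalities.} For \eqref{eq:lp-row}, fix a row $r$ and sum \eqref{eq:triple-capacity} over the $\binom{12}{2}=66$ triples containing $r$. A block $B\ni r$ contributes $\binom{|B|-1}{2}$, and the total capacity is $2\cdot66=132$. Subtracting the contribution $\binom{f-1}{2}$ of $F$ when $r\in F$ gives \eqref{eq:lp-row}.
\item \textbf{Pair inequalities.} For \eqref{eq:lp-pair}, fix a pair $P$ and sum over the $11$ triples containing $P$. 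A block $B\supseteq P$ contributes $|B|-2$, and the total capacity is $22$. Subtracting the contribution $f-2$ of $F$ when $P\subseteq F$ gives \eqref{eq:lp-pair}.
\item \textbf{Count equalities.} Equality \eqref{eq:lp-count} holds because the profile prescribes $n_d$ columns of size $d$, and one column of size $f$ has been removed as $F$.
\end{itemize}

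Next I apply the certificate to this $x$. Since $\alpha\ge0$ and $b-Ax\ge0$, we get $\alpha^T Ax\le \alpha^T b$. Since $Ex=e$, we get $\beta^T Ex=\beta^T e$. Adding these gives
\[
x^T(A^T\alpha+E^T\beta)=\alpha^TAx+\beta^TEx\le b^T\alpha+e^T\beta<0 .
\]
On the other hand, $x\ge0$ and $A^T\alpha+E^T\beta\ge0$ componentwise, so the left-hand side is $\ge0$. This is a contradiction. Note that the argument rules out even real feasible points, so integrality is not used. Exact rational arithmetic makes the check a finite verification.

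The only point needing care is that the relaxation really is a relaxation. Every actual column other than $F$ must be indexed by some allowed support $B$. If supports were pruned, for example those that would violate \eqref{eq:lp-triple} on their own, the pruning must be justified by constraints already implied. Repeated columns are fine, because multiplicities $x_B$ may exceed one, and the triple constraint caps them anyway. I expect this bookkeeping to be the main, though mild, obstacle; the duality step itself is routine.
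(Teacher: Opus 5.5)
Your proposal is correct and follows the paper's proof. You apply the same weak-duality chain $0\le x^T(A^T\alpha+E^T\beta)=\alpha^TAx+\beta^TEx\le b^T\alpha+e^T\beta<0$ to the integral feasible point induced by a hypothetical matrix. Your explicit derivation of the row and pair constraints as sums of triple-capacity constraints, with the contribution of $F$ moved to the right-hand side, spells out what the paper only asserts in the paragraph before the lemma.
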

\begin{proof}
For a feasible $x$,
\[
0\le x^T(A^T\alpha+E^T\beta)
=\alpha^TAx+\beta^TEx
\le\alpha^Tb+\beta^Te<0,
\]
a contradiction. This is the standard alternative theorem of Farkas \cite{Farkas1902}.
\end{proof}

The verifier regenerates every subset $B$ of each active degree and checks every dual coefficient using \texttt{fractions.Fraction}. Stored rational certificates exclude $77$ of the $83$ profiles. The six not separated by this global relaxation are
\begin{equation}\label{eq:sixprofiles}
\begin{split}
&6^{18}7^3 9^1,\qquad 6^{16}7^6,\qquad 5^1 6^{14}7^7,\qquad\
5^2 6^{12}7^8,\qquad 4^1 6^{13}7^8,\qquad 5^3 6^{10}7^9.
\end{split}
\end{equation}

\section{Marked-row deficits}
For every triple $T$, put
\[
  \delta_T=2-\lambda_T\ge0,
  \qquad
  D_r=\sum_{T\ni r}\delta_T.
\]
If
\[
 s=572-\sum_j\binom{d_j}{3}
\]
is the unused triple capacity, then
\begin{equation}\label{eq:deficitsum}
  \sum_{r\in X}D_r=3s.
\end{equation}
Since a fixed row belongs to $\binom{12}{2}=66$ triples, each of capacity two,
\begin{equation}\label{eq:deficitformula}
  D_r=132-\sum_{j:r\in E_j}\binom{d_j-1}{2}.
\end{equation}
This formula gives both congruence restrictions and a local design interpretation.

\subsection{The profile $6^{18}7^3 9^1$}
For this profile, $s=23$, so $\sum_rD_r=69$. Contributions from degree-six and degree-seven blocks in \eqref{eq:deficitformula} are $10$ and $15$, both zero modulo five; the degree-nine contribution is $28\equiv3\pmod5$. Thus
\[
 D_r\equiv
 \begin{cases}
 2\pmod5,&r\notin E_9,\\
 4\pmod5,&r\in E_9.
 \end{cases}
\]
The residue-minimal total is $4\cdot2+9\cdot4=44$. Since the actual total is $69$, only five increments of size five are available, and at least eight rows have residue-minimal deficit.

For a row outside the degree-nine block with $D_r=2$, if $a,b$ count degree-six and degree-seven blocks through $r$, then \eqref{eq:deficitformula} gives $2a+3b=26$, hence $(a,b)=(13,0)$ or $(10,2)$. Deleting $r$ leaves blocks of sizes five and six on twelve points. Pair-capacity degrees imply, respectively, total size-five incidence at most $60<65$, or at most $48<50$. Both are impossible. Therefore all four rows outside $E_9$ consume an increment.

At least eight rows inside $E_9$ consequently have $D_r=4$. For such a row, $2a+3b=20$. The case $(a,b)=(10,0)$ is ruled out by the residual degree-nine block: its eight points permit at most three size-five incidences and the other four at most five, totaling $44<50$. Hence each of these at least eight rows lies in exactly two of the three degree-seven columns. Yet any fixed pair of degree-seven columns can share at most two such rows inside $E_9$, since three shared rows together with $E_9$ would form a $K_{3,3}$. The three pairs account for at most six rows, a contradiction.

\begin{proposition}\label{prop:nine}
The profile $6^{18}7^3 9^1$ is impossible.
\end{proposition}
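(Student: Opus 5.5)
The argument preceding the statement already contains the proof; I would organize it as follows. First I fix the slack. For the profile $6^{18}7^39^1$ we have $\sum_j\binom{d_j}{3}=18\cdot20+3\cdot35+84=549$, so $s=23$. By \eqref{eq:deficitsum}, $\sum_rD_r=69$.

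Next I reduce \eqref{eq:deficitformula} modulo five. Blocks of size six and seven contribute $\binom{5}{2}=10$ and $\binom{6}{2}=15$, both divisible by five. The block $E_9$ contributes $28\equiv3$. Hence $D_r\equiv2$ for the four rows outside $E_9$ and $D_r\equiv4$ for the nine rows inside it. The residue-minimal total is $44$, so there are exactly five increments of $5$ to distribute, and at least eight rows attain their minimal residue value.

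The core step is a local exclusion, which I would carry out in two cases.

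\emph{Rows outside $E_9$.} A row $r\notin E_9$ with $D_r=2$ satisfies $2a+3b=26$ with $b\le3$. This forces $(a,b)=(13,0)$ or $(10,2)$. Deleting $r$ yields $a+b$ blocks on twelve points, and $\lambda_T\le2$ becomes pair capacity $2$ for each pair among the other rows. The derived blocks have sizes $5$ and $6$. Each point $x$ meets the derived blocks in sets whose sizes, minus one, sum to at most $22$ (twice its eleven partners). This bounds the size-five incidence per point and gives totals $60<65$ and $48<50$ respectively. So all four outside rows use an increment, leaving at most one increment for the inside rows.

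\emph{Rows inside $E_9$.} At least eight rows of $E_9$ have $D_r=4$. Subtracting $28$ gives $2a+3b=20$, so $(a,b)\in\{(10,0),(7,2)\}$. The case $b=0$ is killed by the same point-degree count: the derived block of size eight from $E_9$ consumes pair capacity at its eight points, capping their size-five incidences at $3$ each and the others at $5$, for a total of $44<50$. Thus each of the eight rows lies in exactly two of the three $7$-blocks. A pair of $7$-blocks sharing three rows of $E_9$ would, together with $E_9$, give three columns through a common triple, violating \eqref{eq:triple-capacity}. The three pairs therefore cover at most six such rows, which is fewer than eight and gives the contradiction.

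The main obstacle is the point-degree bookkeeping in the derived twelve-point designs. Each case needs an exact pair-capacity count per point, in which a size-$k$ derived block uses $k-1$ of that point's $22$ units. I would check these few integer bounds explicitly, and confirm them by the exact enumeration in the package.
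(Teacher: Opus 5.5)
Your proposal is correct and is essentially the paper's own argument: you use the same slack $s=23$, the same mod-$5$ residues for $D_r$, the same pair-capacity point-degree bounds ($60<65$, $48<50$, $44<50$) in the derived twelve-point systems, and the same pigeonhole over the three pairs of degree-seven columns. The one bound worth writing out is $48$: a point lying in $v_x$ of the two derived size-six blocks has at most $\lfloor(22-5v_x)/4\rfloor=5-v_x$ size-five incidences, so the total is at most $60-12=48$.
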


\section{The marked-row leave method}
It remains to exclude the five profiles in \eqref{eq:sixprofiles} containing only degrees four through seven. Fix a row $r$ and delete it from every block containing it. The resulting residual blocks lie on the twelve-point set $X\setminus\{r\}$.

\begin{definition}
The \emph{leave multigraph} $L_r$ has vertex set $X\setminus\{r\}$ and gives the pair $\{x,y\}$ multiplicity $\delta_{rxy}$. Its total edge multiplicity is $D_r$ and every edge multiplicity lies in $\{0,1,2\}$.
\end{definition}

Suppose the residual blocks have active sizes $s$ and that point $x$ occurs in $u_{x,s}$ blocks of size $s$. Pair capacity through $\{r,x\}$ gives
\begin{equation}\label{eq:leave-degree}
 e_x=22-\sum_s(s-1)u_{x,s},
\end{equation}
where $e_x$ is the degree of $x$ in $L_r$. In addition,
\begin{equation}\label{eq:type-totals}
 \sum_xu_{x,s}=s n_s,
 \qquad
 \sum_xe_x=2D_r.
\end{equation}
Equations \eqref{eq:leave-degree}--\eqref{eq:type-totals} give a finite list of point-type multisets.

Let $M$ be the point-by-residual-block incidence matrix. Its Gram matrix $Q=MM^T$ is determined by the point types and the leave:
\begin{equation}\label{eq:gram}
 Q_{xx}=\sum_su_{x,s},
 \qquad
 Q_{xy}=2-\delta_{rxy}\quad(x\ne y).
\end{equation}
Consequently,
\begin{equation}\label{eq:rankcondition}
  \rank_{\mathbb F_p}(Q)\le q
\end{equation}
for every prime $p$, where $q$ is the number of residual blocks. When $q=12$, $M$ is square and
\begin{equation}\label{eq:detsquare}
  \det(Q)=\det(M)^2
\end{equation}
must be a nonnegative integer square. The verifier uses \eqref{eq:rankcondition} for $p=101,103,107$ and \eqref{eq:detsquare} only as necessary conditions. Cases passing these screens are retained.

For every surviving typed leave, the possible residual blocks form a finite linear factorization system. It requires the prescribed incidence count of every point in every size class, the prescribed pair multiplicities $2-\delta_{rxy}$, and the prescribed number of blocks of each size. An integral residual design would give a nonnegative integral solution. Each screened case is instead excluded by an integer-scaled rational Farkas vector whose inequalities are checked over every eligible residual block.

\begin{proposition}[Certified local screen]\label{prop:localscreen}
For each local parameter tuple invoked below, the program \texttt{local\_screen\_general.cpp} enumerates every point-type distribution satisfying \eqref{eq:leave-degree}--\eqref{eq:type-totals}, every loopless leave multigraph with edge multiplicities at most two and the prescribed degree sequence, and every case surviving \eqref{eq:rankcondition} or \eqref{eq:detsquare}. The Python verifier then checks an exact Farkas certificate for every retained factorization case.
\end{proposition}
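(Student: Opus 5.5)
The proposition is a soundness claim about a finite enumeration followed by certificate checking. I would prove it in two layers: first, that the enumeration is exhaustive; second, that each retained case is infeasible by \cref{lem:farkas}. Fix a local parameter tuple: the marked row $r$, the active residual sizes $s$ with counts $n_s$, and the deficit $D_r$ given by \eqref{eq:deficitformula}.

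\textbf{Exhaustiveness.} Any hypothetical $13\times22$ matrix determines, for the row $r$, point types $(u_{x,s})_s$ and edge degrees $e_x$. These are nonnegative integers obeying \eqref{eq:leave-degree} and \eqref{eq:type-totals}. Each $u_{x,s}$ is bounded by $n_s$, and \eqref{eq:leave-degree} together with $e_x\ge0$ bounds $\sum_s (s-1)u_{x,s}\le22$. So the set of admissible point types is finite and explicitly listable. Multisets of twelve such types with the prescribed column totals are then enumerated, say in nonincreasing order to quotient by point relabeling. For each degree sequence $(e_x)$, the program lists every loopless multigraph with multiplicities in $\{0,1,2\}$ realizing it. The standard method is to fill the upper triangle of the multiplicity matrix row by row, with residual-degree pruning; any symmetry reduction must be justified as a canonical-form restriction under the stabilizer of the type assignment.

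\textbf{Pruning by the Gram screens.} The Gram identity \eqref{eq:gram} holds for the true incidence matrix $M$ by pair capacity through $\{r,x,y\}$. Hence \eqref{eq:rankcondition} and, when $q=12$, \eqref{eq:detsquare} are necessary conditions, and discarding failures never removes a genuine configuration. This uses only that $\rank_{\mathbb F_p}(MM^T)\le\rank M\le q$, together with the multiplicativity of the determinant.

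\textbf{Certificates for the survivors.} Each surviving typed leave defines a linear system over the multiplicities $y_B\ge0$ of eligible residual blocks $B\subseteq X\setminus\{r\}$. The system prescribes the per-point, per-size incidences $u_{x,s}$, the pair coverage $2-\delta_{rxy}$, and the block counts $n_s$. I would restate \cref{lem:farkas} for this equality system, with unrestricted multipliers, where the condition to check is $A^T\beta\ge0$ coordinatewise over all eligible $B$ together with $b^T\beta<0$. The Python checker regenerates every $B$ of each active size and verifies both conditions in exact arithmetic, so no actual matrix realizes that case.

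\textbf{Main obstacle.} The hard part is not the logic but certifying completeness of the C++ enumeration, especially that symmetry breaking in the multigraph enumeration loses no isomorphism class relevant to the typed problem. My first approach would be to avoid symmetry reduction beyond sorting types, and to cross-check leave counts against an independent Python enumerator on the same tuples.
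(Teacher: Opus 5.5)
Your proposal follows the paper, which likewise treats the proposition as a finite computation resting on the points you isolate: finiteness of the typed point distributions and leaves under \eqref{eq:leave-degree}--\eqref{eq:type-totals}, the rank and determinant screens as necessary consequences of $Q=MM^T$, and \cref{lem:farkas} adapted to the equality-constrained factorization system, with everything else delegated to running \texttt{local\_screen\_general.cpp} and the exact checker. One caveat, which applies equally to the statement as written: the paper reports that one survivor in the common $D=7$, $(a,b)=(8,3)$ branch is fractionally feasible, so no local Farkas certificate can exist for it, and that case is closed not by your second layer but by the integral refinement in \cref{lem:commonD7} (orbit enumeration of the $285$ size-six systems and sixteen completion certificates).
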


The proposition is a finite computational statement. Its implementation is deliberately small: the C++ source is approximately four kilobytes, and the certificate checker uses only the Python standard library. The complete enumeration counts used below are summarized in \cref{tab:localcounts}.

\begin{table}[ht]
\centering
\small
\begin{tabular}{@{}llrrr@{}}
\toprule
Branch & Local parameters & type multisets & leaves & certified survivors\\
\midrule
Common $D=7$ & $(a,b)=(5,5)$ & 19 & 8,641 & 0\\
Common $D=7$ & $(a,b)=(8,3)$ & 37 & 6,733 & 4\\
$5^1 6^{14}7^7$, inside & $(a,b)=(6,4)$ & 508 & 43,715 & 0\\
$5^1 6^{14}7^7$, inside & $(a,b)=(3,6)$ & 6 & 534 & 0\\
$5^2 6^{12}7^8$, $c=2$ & $(10,1),(7,3),(4,5)$ & -- & -- & 195+217+28\\
$5^3 6^{10}7^9$, $c=3$ & $(8,2),(5,4),(2,6)$ & -- & -- & 4,746+656+3\\
$4^1 6^{13}7^8$, inside & $(8,3),(5,5)$ & 41+48 & 367+275 & 12+3\\
\bottomrule
\end{tabular}
\caption{Selected exact local-enumeration counts. A dash indicates that only the final screened-case count is used in the proof report.}
\label{tab:localcounts}
\end{table}

\section{The common deficit-seven branch}
Four of the remaining profiles may force a row outside all degree-five blocks with $D_r=7$. If $a,b$ denote the numbers of degree-six and degree-seven columns through $r$, then \eqref{eq:deficitformula} leaves
\[
 (a,b)=(5,5),(8,3),(11,1).
\]
The $(11,1)$ branch has no point-type distribution. For $(5,5)$, all $8{,}641$ leave graphs fail the Gram-rank condition. The $(8,3)$ branch has $37$ point-type distributions and $6{,}733$ leave graphs; four cases survive the Gram screen. Three have immediate exact local Farkas certificates.

The fourth case is fractionally feasible and therefore requires an integral refinement. The verifier enumerates all systems of its three residual size-six blocks, obtaining exactly $285$ systems. The full automorphism group preserving the typed leave is $S_2\times S_3\times S_6$, of order $8{,}640$, and partitions the systems into three orbits of sizes $15$, $180$, and $90$. The first two representatives have no completion by eight residual size-five blocks; the third has exactly four completions. For each of these four local completions, and for each of the four relevant global profiles, a separate rational completion certificate excludes the remaining columns. Thus sixteen exact completion certificates close the branch.

\begin{lemma}\label{lem:commonD7}
No marked row outside all exceptional small blocks can realize the common deficit-seven configuration used in the profiles $6^{16}7^6$, $5^1 6^{14}7^7$, $5^2 6^{12}7^8$, or $5^3 6^{10}7^9$.
\end{lemma}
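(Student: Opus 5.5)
The proof of Lemma~\ref{lem:commonD7} will be assembled from the finite case analysis described just above. Fix a marked row $r$ lying in no exceptional block of degree at most five, with $D_r=7$. Every block through $r$ then has degree six or seven. Let $a,b$ count the degree-six and degree-seven blocks through $r$. By \eqref{eq:deficitformula}, $10a+15b=132-7=125$, that is, $2a+3b=25$. The solutions with the degree-seven count at most nine are $(a,b)=(11,1),(8,3),(5,5)$; the candidate $(2,7)$ will be excluded by the profile bounds on $n_7$ together with the condition $a+b\le 22$. After deleting $r$, the residual blocks have sizes five and six on twelve points: there are $a$ blocks of size five, $b$ of size six, and $q=a+b$ residual blocks in total.

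Next I invoke Proposition~\ref{prop:localscreen} with the local parameter tuples $(a,b)$ and proceed branch by branch.
\begin{enumerate}[label=(\roman*)]
\item For $(11,1)$, the type equations \eqref{eq:leave-degree}--\eqref{eq:type-totals}, with $\sum_x e_x=14$ and $e_x=22-4u_{x,5}-5u_{x,6}$, have no nonnegative integral solution, so the branch closes immediately.
\item For $(5,5)$, the $19$ type multisets and $8{,}641$ leaves are enumerated, and each fails the rank screen \eqref{eq:rankcondition} at one of the primes $101,103,107$. Here $q=10<12$, so a rank exceeding $10$ is already a contradiction.
\item For $(8,3)$, the enumeration leaves four screened survivors. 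Three of them are closed by stored local Farkas certificates checked via Lemma~\ref{lem:farkas}, applied to the factorization system with prescribed point incidences, pair multiplicities $2-\delta_{rxy}$ and block counts.
\end{enumerate}

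The main obstacle is the fourth $(8,3)$ survivor, which is fractionally feasible, so no local Farkas vector exists for it. For this case I would branch integrally. First, enumerate all $285$ choices of the three residual size-six blocks consistent with the typed leave. Then reduce by the stabilizer $S_2\times S_3\times S_6$ of the typed leave to three orbit representatives. For each representative, solve exactly for completions by eight residual size-five blocks. The first two representatives should admit none, and the third should admit exactly four. Each of the four completions fixes the whole neighbourhood of $r$. I would then return to the global relaxation \eqref{eq:lp-triple}--\eqref{eq:lp-count}, with these eleven blocks, extended by $r$, fixed in place of $F$, and solve it separately for each of the four profiles $6^{16}7^6$, $5^16^{14}7^7$, $5^26^{12}7^8$ and $5^36^{10}7^9$. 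The remaining counts $n_d$ are adjusted by subtracting the fixed blocks. For each of the resulting $16$ pairs (completion, profile), a rational Farkas certificate must be found. The delicate point is that the exceptional degree-five blocks avoid $r$, which is consistent with the choice of $r$ and must be encoded in the count constraints. Replaying all sixteen certificates in exact arithmetic finishes the branch, and with it the lemma.
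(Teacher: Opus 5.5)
Your outline follows the paper's argument step for step. The branch $(11,1)$ has no point types, and all $8{,}641$ leaves for $(5,5)$ fail the Gram-rank screen. The branch $(8,3)$ leaves four survivors, three of which carry local Farkas certificates. The fractionally feasible fourth is closed through the $285$ size-six systems, the three $S_2\times S_3\times S_6$ orbits, the four completions, and sixteen completion certificates (one per completion and profile).

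The step that fails as written is your dismissal of $(a,b)=(2,7)$, which does solve $2a+3b=25$. The profile counts exclude $b=7$ only in $6^{16}7^6$, where $n_7=6$. In $5^1 6^{14}7^7$, $5^2 6^{12}7^8$ and $5^3 6^{10}7^9$ one has $n_7\ge7$ and $n_6\ge2$, and the condition $a+b=9\le22$ is vacuous. So nothing you cite removes this branch, yet the lemma is applied to exactly those three profiles. The paper's displayed list of three pairs also omits it, so it needs an explicit argument.

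The correct closure is at the point-type stage:
\begin{itemize}
\item Only two residual blocks have size five, so $u_{x,5}\le2$.
\item $e_x\ge0$ in \eqref{eq:leave-degree} gives $4u_{x,5}+5u_{x,6}\le22$.
\item Together these force $u_{x,5}+u_{x,6}\le4$ at each of the twelve points, so the total is at most $48$.
\item The residual blocks carry $2\cdot5+7\cdot6=52>48$ incidences, a contradiction.
\end{itemize}
The same count settles your branch (i) by hand. For $(11,1)$, $u_{x,6}\le1$ forces $u_{x,5}+u_{x,6}\le5$, against $11\cdot5+6=61>60$.

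A minor remark on the completion stage: once the eleven blocks through $r$ are fixed, every remaining column avoids $r$, not only the degree-five ones. You may therefore restrict all remaining supports to $X\setminus\{r\}$ in the completion relaxation, which can only make the sixteen certificates easier to find.
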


\section{Elimination of the five final profiles}
We now combine residue baselines with \cref{prop:localscreen,lem:commonD7}.

\subsection{The profile $6^{16}7^6$}
Here $s=42$, so $\sum_rD_r=126$, and every $D_r\equiv2\pmod5$. If no row had deficit $2$ or $7$, then every deficit would be at least $12$, giving a total at least $156$. Every deficit-two local type is rejected by point-type or Gram screening, while every deficit-seven type is rejected by \cref{lem:commonD7}. Hence the profile is impossible.

\subsection{The profile $5^1 6^{14}7^7$}
Let $c\in\{0,1\}$ record membership in the unique degree-five block. Then $D_r\equiv2-c\pmod5$. The minimum cases $D=2$ for $c=0$ and $D=1$ for $c=1$ are locally impossible. The next baseline has deficit seven on the eight outside rows and six on the five inside rows, totaling
\[
 8\cdot7+5\cdot6=86.
\]
The required total is $111$, only five increments of size five larger, so some row attains this baseline. Outside rows are impossible by \cref{lem:commonD7}; the inside possibilities
\[
 (a,b)=(3,6),(6,4),(9,2),(12,0)
\]
are all rejected by exact local screening. Thus the profile is impossible.

\subsection{The profile $5^2 6^{12}7^8$}
Let $c\in\{0,1,2\}$ count the two degree-five blocks through the marked row. The initial cases $(c,D)=(0,2),(1,1),(2,0)$ are impossible. The next residue baseline has total
\[
 7\cdot13-10=81,
\]
whereas the required total is $96$. Thus a baseline row exists. The $c=0$ and $c=1$ branches reduce to earlier screens. For $c=2,D=5$, the possible values are
\[
 (a,b)=(10,1),(7,3),(4,5),(1,7).
\]
The last has no point types. The first three yield $195$, $217$, and $28$ screened cases, respectively, all excluded by exact Farkas certificates.

\subsection{The profile $5^3 6^{10}7^9$}
The initial $c=0,1,2$ cases are again impossible. The next baseline has total
\[
 7\cdot13-15=76,
\]
while the required total is $81$, so a baseline row exists. The branches $c=0,1,2$ have already been excluded. For $c=3,D=4$, the possibilities
\[
 (a,b)=(8,2),(5,4),(2,6)
\]
produce $4{,}746$, $656$, and $3$ screened local cases. Every one of the $5{,}405$ factorization systems has an exact integer-scaled Farkas certificate.

\subsection{The profile $4^1 6^{13}7^8$}
There are nine rows outside and four inside the degree-four block. Their minimum-residue baseline is
\[
 9\cdot2+4\cdot4=34,
\]
whereas the required total is $84$. If neither minimum occurred, the total would be at least $99$, so a row of deficit two outside or deficit four inside must occur. Outside deficit-two rows are impossible. For an inside deficit-four row, the residual exceptional block has size three. The endpoint branches have no point types; the two nontrivial branches leave $12$ and $3$ screened factorization cases, all excluded by exact certificates.

\begin{proposition}\label{prop:five}
None of the five profiles
\[
6^{16}7^6,
\quad5^1 6^{14}7^7,
\quad5^2 6^{12}7^8,
\quad4^1 6^{13}7^8,
\quad5^3 6^{10}7^9
\]
can occur in a $K_{3,3}$-free $13\times22$ matrix with $138$ ones.
\end{proposition}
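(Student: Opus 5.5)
The proof of \cref{prop:five} is essentially an assembly of the five subsections just given. The uniform mechanism is a pigeonhole argument on marked-row deficits. For each profile I would first record the residue of $D_r$ modulo $5$ from \eqref{eq:deficitformula}. Blocks of sizes six and seven contribute $10$ and $15$, both divisible by five. A residual block of size $d\in\{4,5\}$ through $r$ contributes $\binom{d-1}{2}\in\{3,6\}$. Hence $D_r$ is determined modulo $5$ by $c$, the number of exceptional small blocks through $r$. Next I would compute the total $\sum_r D_r=3s$ from \eqref{eq:deficitsum}, and compare it with the \emph{residue baseline}, the smallest total compatible with the congruences. Since every increment is a multiple of five, a slack smaller than $5$ times the number of rows forces some row to sit at its minimal admissible deficit in some class $c$.

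The first step is to dispose of the lowest residue levels, namely $D=2$ outside the small blocks and the analogous minima $(c,D)=(1,1),(2,0)$ and so on. For these I would cite the point-type enumeration of \cref{prop:localscreen}: the relation $2a+3b=\text{const}$ coming from \eqref{eq:deficitformula} gives the possible $(a,b)$, and each of them either has no type multiset satisfying \eqref{eq:leave-degree}--\eqref{eq:type-totals} or fails the Gram test \eqref{eq:rankcondition}.

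Once these minima are excluded, the baseline moves up one level, to $D=7$ outside or $D=7-\text{(small contributions)}$ inside. I would then recheck the slack inequality for each profile: $156>126$ for $6^{16}7^6$, $86+25>111$ for $5^1$, $81+15<96$ for $5^2$ (with only $13$ rows this still forces a baseline row, because five or more increments cannot cover all rows), and similarly for $5^3$ and $4^1$. This shows that some row attains the second-level baseline. If that row is outside all small blocks, \cref{lem:commonD7} excludes it directly. If it is inside, I would list the $(a,b)$ branches and invoke the certified counts in \cref{tab:localcounts}. Each surviving typed leave has an exact Farkas certificate for its residual factorization system, checked as in \cref{lem:farkas}.

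The main obstacle is bookkeeping rather than insight. For every profile one must verify that the pigeonhole inequality really does force a row into a branch that has actually been certified, and not into an uncertified intermediate level. For the $5^2$ and $5^3$ profiles the $c=0,1$ branches must be shown to coincide with local parameter tuples already screened. I would check this carefully by matching each $(c,D,a,b)$ tuple to a row of \cref{tab:localcounts} or to \cref{lem:commonD7}. The correctness of the enumeration itself is delegated to \cref{prop:localscreen}.
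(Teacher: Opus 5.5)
Your overall route is the paper's: residues of $D_r$ modulo $5$ fixed by $c$, then comparing $3s$ with a residue baseline, then local screens and \cref{lem:commonD7}. But the bookkeeping you single out as the main risk fails at two concrete points. First, you claim every minimum residue level is killed by point types or the Gram test alone. That is false exactly where the certificate load sits. In $4^1 6^{13}7^8$ an inside row has $D\equiv 129\equiv 4\pmod 5$. Its minimum $D=4$ (residual exceptional block of size three) survives the screens in the branches $(a,b)=(8,3),(5,5)$. This leaves $12+3$ factorization cases that are excluded only by exact Farkas certificates. In $5^3 6^{10}7^9$ a row with $c=3$ has $D\equiv 114\equiv 4$, so its minimum $D=4$ is already the baseline level $7-c$. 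The branches $(8,2),(5,4),(2,6)$ leave $4{,}746+656+3$ survivors, again closed only by Farkas certificates. So the minimum-level step must invoke the full \cref{prop:localscreen}, certificates included, not just the screens.

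Second, your uniform step ``second-level baseline, then \cref{lem:commonD7} for outside rows'' does not apply to $4^1 6^{13}7^8$. The lemma is stated and certified only for the other four profiles, because its last $(8,3)$ case needs profile-specific completion certificates. The paper avoids it entirely. With outside $D=2$ excluded, if no inside row had $D=4$ the total would be at least $9\cdot 7+4\cdot 9=99>84=3s$. So an inside deficit-four row must exist, and that is the certified $12+3$ case above. If you instead read your rule $D=7-(\text{small contribution})$ literally, the inside baseline is $4$ again and the total is $79\le 84$. An outside $D=7$ row is then left with no applicable lemma.

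Two smaller points. For $5^3$, the $c=2$, $D=5$ branch must also be matched to the $5^2$ screen, not just $c=0,1$. Also, $86+25=111$ and $81+15=96$ are equalities, not strict inequalities. The correct pigeonhole is that the slack $25$ (resp.\ $15$, $5$) is below $5\cdot 13$, so at most $5$ (resp.\ $3$, $1$) rows can leave the baseline.
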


\section{Proof of the main theorem}
\begin{proof}[Proof of \cref{thm:13}]
The witness in \cref{sec:witness-appendix} proves \eqref{eq:lower}. Suppose a $138$-one matrix existed. The penalty enumeration \eqref{eq:penaltybudget} places its degree multiset among $83$ profiles. Exact global Farkas certificates exclude $77$ profiles. \Cref{prop:nine} excludes $6^{18}7^3 9^1$, and \cref{prop:five} excludes the five remaining profiles. Therefore no $138$-one matrix exists. Deleting ones shows that no larger $K_{3,3}$-free matrix exists either, so the checked $137$-one construction is optimal.
\end{proof}

\section{Verification, reproducibility, and trust boundary}
The complete proof artifact is designed to separate discovery from verification.

\subsection{One-command verification}
From the repository root, run
\begin{verbatim}
cd proof
python3 verify_all.py
\end{verbatim}
The verifier compiles \texttt{local\_screen\_general.cpp} with a C++17 compiler if needed, then performs the following checks:
\begin{enumerate}[label=\arabic*.]
\item validates the $137$-one witness over all $286$ row triples;
\item independently re-enumerates the $83$ possible $138$-edge degree profiles;
\item replays $77$ global rational Farkas certificates;
\item checks the elementary exclusion of $6^{18}7^3 9^1$;
\item exhaustively regenerates every marked-row point-type and leave case used for the five final profiles;
\item verifies every local and completion certificate coefficient exactly.
\item runs the nested 12-by-18 through 12-by-22 witness scans and deletion-chain checks in \texttt{proof/z12\_18\_21/}.
\end{enumerate}
The expected final line is
\begin{verbatim}
ALL EXACT-VALUE VERIFICATION GATES PASSED
12-BY-18-THROUGH-22 GATE PASSED
\end{verbatim}
The frontier gate is run separately from its directory:
\begin{verbatim}
cd proof/frontier_closure
python3 verify_all.py
python3 mutation_tests.py
python3 frontier_propagation.py
\end{verbatim}
It verifies 13 global and 12 local frontier certificates, six explicit
witnesses, the five exact equalities above, and the $132\le Z(16,17)\le133$
interval. The optional dependency flag replays the bundled 12-row package.

\subsection{Certificate counts}
The complete development archive also retains an earlier certified $139$-edge exclusion. Across all retained gates, the checker verifies $125{,}983$ candidate-block coefficients for the historical $139$ proof, $390{,}039$ for the global $138$ reduction, and $5{,}262{,}392$ for the final local proof, totaling
\[
 5{,}778{,}414
\]
exact nonnegativity checks. The $139$ component is not logically needed for \cref{thm:13}; it is retained as an independent regression test.

\subsection{What is and is not trusted}
The proof depends on:
\begin{itemize}
\item exhaustive finite integer enumeration in the supplied C++ source;
\item Python integer and \texttt{Fraction} arithmetic;
\item explicit Farkas vectors stored in JSON;
\item the explicit lower-bound witness.
\end{itemize}
Floating-point linear programming was used during discovery to locate dual vectors. Before inclusion, these vectors were rationalized or integer-scaled, and the final checker verifies their defining inequalities from scratch. No floating-point solver status, MILP infeasibility flag, tolerance, or randomized search result is a premise.

The only symmetry reduction in the integral common-$D=7$ branch constructs all $8{,}640$ group elements explicitly, verifies that each preserves point types and leave multiplicities, and checks orbit coverage of all $285$ labeled systems. Repeated columns remain permitted throughout.

\subsection{Epistemic status}
The package has passed its internal exact-arithmetic gate and an adversarial audit targeting omitted degree profiles, hidden simplicity assumptions, incomplete leave enumeration, invalid rank or determinant implications, symmetry loss, and numerical tolerance. Independent reproduction, a second implementation of the local enumerator, and expert peer review remain desirable. Accordingly, this manuscript reports a complete reproducible computer-assisted proof, not a peer-reviewed consensus result.

\section{Discussion}
The proof illustrates a useful division of labor for finite extremal problems. A coarse global relaxation disposes of most degree patterns efficiently. The few profiles surviving the relaxation are not random hard instances: they are nearly balanced around the zero-penalty degrees six and seven, and their small unused triple capacity forces strong congruences on marked-row deficits. Passing to the leave of a marked row converts those congruences into a compact residual packing problem whose Gram matrix is almost determined.

The method is potentially reusable. For nearby $K_{3,3}$-free matrix problems, one may seek a supporting line for $d\mapsto\binom d3$, enumerate low-penalty degree profiles, apply global dual separation, and reserve exact leave enumeration for the integrality-only residue. The computational burden is then concentrated in small, transparent certificate families rather than a monolithic SAT or MILP run.

\sloppy
A literature and repository audit dated 3 August 2026 located no prior
publication or public preprint reporting the exact values
$\Z(12,18,3,3)=108$, $\Z(12,19,3,3)=114$, $\Z(12,20,3,3)=120$, or
$\Z(12,21,3,3)=126$. The audit treats the $n=22$ value and the
3-$(12,6,2)$ design as prior/classical material, and it records the scope and
limitation of the search in the accompanying 12-row package.

For $\Z(13,22,3,3)$, the 137-edge construction was previously reported while
the exact upper bound is supplied here. These priority statements are
necessarily provisional and should be rechecked during peer review.

The frontier audit dated 3 August 2026 located no prior indexed report of
$\Z(13,18)=116$, $\Z(14,17)=118$, $\Z(14,18)=124$, $\Z(15,17)=126$, or
$\Z(15,18)=132$, nor of the 132-edge $16\times17$ construction. This is a
dated search finding rather than a universal nonpublication guarantee.
\fussy

\section*{Acknowledgements}
OpenAI GPT 5.6 Sol High was used for assistance with exploratory reasoning,
implementation, verification, manuscript integration, and packaging. The
author is solely responsible for the mathematical claims and final manuscript.
No heuristic solver status is used as a theorem.

\appendix
\section{The nested $12$-row witness family}
\label{app:12-design}
The following 22 supports form the verified classical 3-$(12,6,2)$ design.
Every triple of row labels occurs in exactly two supports. The first $n$
supports therefore give the lower-bound witness for each $18\le n\le22$.
The machine-readable CSV files under \texttt{proof/z12\_18\_21/data/} are the
load-bearing representation used by the verifier.
\[
\begin{array}{r l}
1&\{2,7,8,9,11,12\}\\
2&\{1,3,4,5,6,10\}\\
3&\{1,2,3,4,8,9\}\\
4&\{2,4,5,8,10,11\}\\
5&\{1,4,6,7,8,11\}\\
6&\{3,5,6,8,9,11\}\\
7&\{1,2,3,5,7,11\}\\
8&\{1,2,6,9,10,11\}\\
9&\{1,2,5,6,8,12\}\\
10&\{3,4,5,7,8,12\}\\
11&\{1,3,6,7,9,12\}\\
12&\{1,2,4,7,10,12\}\\
13&\{2,3,5,9,10,12\}\\
14&\{4,6,8,9,10,12\}\\
15&\{2,3,4,6,11,12\}\\
16&\{1,4,5,9,11,12\}\\
17&\{5,6,7,10,11,12\}\\
18&\{1,3,8,10,11,12\}\\
19&\{1,5,7,8,9,10\}\\
20&\{2,3,6,7,8,10\}\\
21&\{2,4,5,6,7,9\}\\
22&\{3,4,7,9,10,11\}
\end{array}
\]

\section{The $137$-edge witness}\label{app:witness}
\label{sec:witness-appendix}
Rows are labeled $1,\dots,13$. The following $22$ column supports have total size $137$:
\[
\begin{array}{@{}ll@{}}
B_{1}=\{2,4,6,7,11,13\} & B_{2}=\{3,6,7,8,10,13\}\\
B_{3}=\{3,4,6,8,11,12\} & B_{4}=\{1,3,8,9,10,12\}\\
B_{5}=\{1,3,4,6,9,13\} & B_{6}=\{1,2,3,8,11,13\}\\
B_{7}=\{1,2,4,6,8,10\} & B_{8}=\{1,2,3,6,7,12\}\\
B_{9}=\{2,7,8,10,11,12\} & B_{10}=\{2,6,8,9,12,13\}\\
B_{11}=\{2,3,4,7,8,9\} & B_{12}=\{2,3,4,10,12,13\}\\
B_{13}=\{1,2,7,9,10,13\} & B_{14}=\{2,3,6,9,10,11\}\\
B_{15}=\{3,7,9,11,12,13\} & B_{16}=\{1,3,4,7,10,11\}\\
B_{17}=\{1,2,4,9,11,12\} & B_{18}=\{1,5,6,10,11,12,13\}\\
B_{19}=\{1,5,6,7,8,9,11\} & B_{20}=\{4,5,8,9,10,11,13\}\\
B_{21}=\{1,4,5,7,8,12,13\} & B_{22}=\{4,5,6,7,9,10,12\}\\
\end{array}
\]
The first seventeen blocks have size six and the final five have size seven. The accompanying verifier checks that each row triple occurs in at most two blocks.

\sloppy\raggedright

\fussy
\end{document}